\newtheorem{thm}{Theorem}
\newtheorem{lem}[thm]{Lemma}
\newtheorem{cor}[thm]{Corollary}
\theoremstyle{definition}
\newtheorem{ex}[thm]{Example}
\providecommand{\abs}[1]{\lvert#1\rvert}
\providecommand{\Abs}[1]{\Bigl\lvert#1\Bigr\rvert}
\begin{document}
\title[FTAP]{Two versions of the fundamental\\theorem of asset pricing}

\author{Patrizia Berti}
\address{Patrizia Berti, Dipartimento di Matematica Pura ed Applicata ``G. Vitali'', Universita' di Modena e Reggio-Emilia, via Campi 213/B, 41100 Modena, Italy}
\email{patrizia.berti@unimore.it}

\author{Luca Pratelli}
\address{Luca Pratelli, Accademia Navale, viale Italia 72, 57100 Livorno,
Italy} \email{pratel@mail.dm.unipi.it}

\author{Pietro Rigo}
\address{Pietro Rigo (corresponding author), Dipartimento di Matematica ``F. Casorati'', Universita' di Pavia, via Ferrata 1, 27100 Pavia, Italy}
\email{pietro.rigo@unipv.it}

\keywords{Arbitrage, Convex cone, Equivalent
martingale measure, Equivalent probability measure with given marginals, Finitely additive probability,
Fundamental theorem of asset pricing}

\subjclass[2010]{60A05, 60A10, 28C05, 91B25, 91G10}

\begin{abstract}
Let $L$ be a convex cone of real random variables on the probability space $(\Omega,\mathcal{A},P_0)$. The existence of a probability $P$ on $\mathcal{A}$ such that
\begin{equation*}
P\sim P_0,\quad E_P\abs{X}<\infty\,\text{ and }\,E_P(X)\leq 0\,\text{ for all }X\in L
\end{equation*}
is investigated. Two results are provided. In the first, $P$ is a finitely additive probability, while $P$ is $\sigma$-additive in the second. If $L$ is a linear space then $-X\in L$ whenever $X\in L$, so that $E_P(X)\leq 0$ turns into $E_P(X)=0$. Hence, the results apply to various significant frameworks, including equivalent martingale measures and equivalent probability measures with given marginals.
\end{abstract}

\maketitle

\section{Introduction}\label{mot}

Throughout, $(\Omega,\mathcal{A},P_0)$ is a probability space and $L$ a convex cone of real random variables on $(\Omega,\mathcal{A},P_0)$. We focus on those probabilities $P$ on $\mathcal{A}$ such that
\begin{equation}\label{bgf57}
P\sim P_0,\quad E_P\abs{X}<\infty\,\text{ and }\,E_P(X)\leq 0\,\text{ for all }X\in L.
\end{equation}
Our main concern is the existence of one such $P$. Essentially two results are provided. In the first, $P$ is a finitely additive probability, while $P$ is $\sigma$-additive in the second. The reference probability $P_0$ is $\sigma$-additive.

In economic applications, for instance, $L$ could be a collection of random variables dominated by stochastic integrals of the type $\int_0^1 H\,dS$, where the semimartingale $S$ describes the stock-price process, and $H$ is a predictable $S$-integrable process ranging in some class of admissible trading strategies; see \cite{ROKSCA}.

However, even if our results apply to any convex cone $L$, this paper has been mostly written having a linear space in mind. In fact, if $L$ is a linear space, since $-X\in L$ whenever $X\in L$, condition \eqref{bgf57} yields
\begin{equation*}
E_P(X)=0\quad\text{for all }X\in L.
\end{equation*}
Therefore, the addressed problem can be motivated as follows.

Let $S=(S_t:t\in T)$ be a real process on $(\Omega,\mathcal{A},P_0)$ indexed by $T\subset\mathbb{R}$. Suppose $S$ is adapted to a filtration $\mathcal{G}=(\mathcal{G}_t:t\in T)$ and $S_{t_0}$ is a constant random variable for some $t_0\in T$. A classical problem in mathematical finance is the existence of an {\it equivalent martingale measure}, that is, a $\sigma$-additive probability $P$ on $\mathcal{A}$ such that $P\sim P_0$ and $S$ is a $\mathcal{G}$-martingale under $P$. But, with a suitable choice of the linear space $L$, an equivalent martingale measure is exactly a $\sigma$-additive solution $P$ of \eqref{bgf57}. It suffices to take $L$ as the linear space generated by the random variables
\begin{equation*}
I_A\,(S_u-S_t)\quad\text{for all }u,\,t\in T\text{ with }u>t\text{ and }A\in\mathcal{G}_t.
\end{equation*}
Note also that, if $L$ is taken to be the convex cone generated by such random variables, a $\sigma$-additive solution $P$ of \eqref{bgf57} is an {\it equivalent super-martingale measure}.

Equivalent martingale measures are usually requested to be $\sigma$-additive, but their economic interpretation is preserved if they are only finitely additive. Thus, to look for finitely additive equivalent martingale measures seems to be reasonable. We refer to \cite{BPR1}-\cite{BPR2} and the beginning of Section \ref{st3} for a discussion on this point.

Equivalent martingale measures (both $\sigma$-additive and finitely additive) are the obvious motivation for our problem, and this explains the title of this paper. But they are not the only motivation. There are other issues which can be reduced to the existence of a probability $P$ satisfying \eqref{bgf57} for a suitable linear space $L$ (possibly without requesting $P\sim P_0$). One example are equivalent probability measures with given marginals; see Example \ref{u6}. Another example is compatibility of conditional distributions; see e.g. \cite{BDR}. A last example is de Finetti's coherence principle and related topics; see \cite{BR96} and references therein.

This paper consists of two results (Theorems \ref{t1} and \ref{p3b}) some corollaries and a long final section of examples.

In Theorem \ref{t1}, under the assumption that each $X\in L$ is bounded, the existence of a finitely additive probability $P$ satisfying \eqref{bgf57} is given various characterizations. As an example, one such $P$ exists if and only if
\begin{equation*}
\bigl\{P_0(X\in\cdot):\,X\in L,\,\,X\geq -1\text{ a.s.}\bigr\}
\end{equation*}
is a tight collection of probability laws on the real line. Furthermore, under some conditions, Theorem \ref{t1} also applies when the elements of $L$ are not bounded; see Corollary \ref{fol7f3b}.

Theorem \ref{p3b} is our main result. No assumption on the convex cone $L$ is required. A $\sigma$-additive probability $P$ satisfying \eqref{bgf57} is shown to exist if and only if
\begin{gather*}
E_Q\abs{X}<\infty\quad\text{and}\quad E_Q(X)\leq k\,E_Q(X^-)
\end{gather*}
for all $X\in L$, some constant $k\geq 0$ and some $\sigma$-additive probability $Q$ such that $Q\sim P_0$. Moreover, when applied with $Q=P_0$, the above condition amounts to the existence of a $\sigma$-additive probability $P$ satisfying \eqref{bgf57} and $r\,P_0\leq P\leq s\,P_0$ for some constants $0<r\leq s$.

Some applications of Theorems \ref{t1} and \ref{p3b} are given in Section \ref{sg3}.

\section{Notation}\label{not6}

In the sequel, as in Section \ref{mot}, $L$ is a convex cone of real random variables on the fixed probability space $(\Omega,\mathcal{A},P_0)$. Thus,
\begin{equation*}
\sum_{j=1}^n\lambda_j\,X_j\in L\quad\text{for all }n\geq 1,\,\lambda_1,\ldots,\lambda_n\geq 0\text{ and }X_1,\ldots,X_n\in L.
\end{equation*}

We let $\mathbb{P}$ denote the set of finitely additive probabilities on $\mathcal{A}$ and $\mathbb{P}_0$ the subset of those $P\in\mathbb{P}$ which are $\sigma$-additive. Recall that $P_0\in\mathbb{P}_0$.

Sometimes, $L$ is identified with a subset of $L_p$ for some $0\leq p\leq\infty$, where
\begin{equation*}
L_p=L_p(\Omega,\mathcal{A},P_0).
\end{equation*}
In particular, $L$ can be regarded as a subset of $L_\infty$ if each $X\in L$ is bounded.

For every real random variable $X$, we let
\begin{gather*}
\text{ess sup}(X)=\inf\{x\in\mathbb{R}:P_0(X>x)=0\}\quad\text{where }\inf\emptyset=\infty.
\end{gather*}

Given $P,\,T\in\mathbb{P}$, we write $P\ll T$ to mean that $P(A)=0$ whenever $A\in\mathcal{A}$ and $T(A)=0$. Also, $P\sim T$ stands for $P\ll T$ and $T\ll P$.

Let $P\in\mathbb{P}$ and $X$ a real random variable. We write
\begin{equation*}
E_P(X)=\int XdP
\end{equation*}
whenever $X$ is $P$-integrable. Every bounded random variable is $P$-integrable. If $X$ is unbounded but $X\geq 0$, then $X$ is $P$-integrable if and only if $\inf_nP(X>n)=0$ and $\sup_n\int X\,I_{\{X\leq n\}}\,dP<\infty$. In this case,
\begin{equation*}
\int X\,dP=\sup_n\int X\,I_{\{X\leq n\}}\,dP.
\end{equation*}
An arbitrary real random variable $X$ is $P$-integrable if and only if $X^+$ and $X^-$ are both $P$-integrable, and in this case $\int XdP=\int X^+dP-\int X^-dP$.

In the sequel, a finitely additive solution $P$ of \eqref{bgf57} is said to be an {\em equivalent super-martingale finitely additive probability} (ESFA). We let $\mathbb{S}$ denote the (possibly empty) set of ESFA's. Thus, $P\in\mathbb{S}$ if and only if
\begin{equation*}
P\in\mathbb{P},\quad P\sim P_0,\quad X\text{ is }P\text{-integrable}\quad\text{and}\quad E_P(X)\leq 0\,\text{ for each }X\in L.
\end{equation*}
Similarly, a $\sigma$-additive solution $P$ of \eqref{bgf57} is an {\em equivalent super-martingale measure} (ESM). That is, $P$ is an ESM if and only if $P\in\mathbb{P}_0\cap\mathbb{S}$. Recall that, if $L$ is a linear space and $P$ is an ESFA or an ESM, then $E_P(X)=0$ for all $X\in L$.

Finally, it is convenient to recall the classical no-arbitrage condition
\begin{equation}\label{nac}
L\cap L_0^+=\{0\}\quad\text{or equivalently}\quad (L-L_0^+)\cap L_0^+=\{0\}.\tag{NA}
\end{equation}

\section{Equivalent super-martingale finitely additive probabilities}\label{st3}

In \cite{BPR1}-\cite{BPR2}, ESFA's are defended via the following arguments.

\vspace{0.2cm}

\begin{itemize}

\item The finitely additive probability theory is well founded and developed, even if not prevailing. Among its supporters, we mention B. de Finetti, L.J. Savage and L.E. Dubins.

\item It may be that ESFA's are available while ESM's fail to exist.

\item In option pricing, when $L$ is a linear space, ESFA's give arbitrage-free prices just as ESM's. More generally, the economic motivations of martingale probabilities do not depend on whether they are $\sigma$-additive or not. See e.g. \cite[Chapter 1]{DS05}.

\item Approximations. Each ESFA $P$ can be written as $P=\delta\,P_1+(1-\delta)\,Q$, where $\delta\in [0,1)$, $P_1\in\mathbb{P}$, $Q\in\mathbb{P}_0$ and $Q\sim P_0$. Thus, when ESM's fail to exist, one might be content with an ESFA $P$ with $\delta$ small enough. An extreme situation of this type is exhibited in Example \ref{s4r}.

\end{itemize}

\vspace{0.2cm}

This section deals with ESFA's. Two distinct situations (the members of $L$ are, or are not, bounded) are handled separately.

\subsection{The bounded case}\label{y7j9}

In this Subsection, $L$ is a convex cone of real {\it bounded} random variables. Hence, the elements of $L$ are $P$-integrable for any $P\in\mathbb{P}$.

We aim to prove a sort of {\it portmanteau} theorem, that is, a result which collects various characterizations for the existence of ESFA's. To this end, the following technical lemma is needed.

\begin{lem}\label{kumon} Let $C$ be a convex class of real bounded random variables, $\phi:C\rightarrow\mathbb{R}$ a linear map, and $\mathcal{E}\subset\mathcal{A}$ a
collection of nonempty events such that $A\cap B\in\mathcal{E}$
whenever $A,\,B\in\mathcal{E}$. There is $P\in\mathbb{P}$ satisfying
\begin{equation*}
\phi(X)\leq E_P(X)\quad\text{and}\quad P(A)=1\quad\text{for all }X\in
C\text{ and }A\in\mathcal{E}
\end{equation*}
if and only if
\begin{equation*}
\sup_AX\geq\phi(X)\quad\text{for all }X\in C\text{ and
}A\in\mathcal{E}.
\end{equation*}
\end{lem}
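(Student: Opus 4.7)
The ``only if'' direction is immediate: given $P$ with the stated properties, for $X\in C$ and $A\in\mathcal E$, boundedness of $X$ and $P(A)=1$ give $E_P(X)=E_P(X\,I_A)\le\sup_AX$ (since $X\,I_A\le\sup_AX\cdot I_A$ pointwise), whence $\phi(X)\le\sup_AX$.

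For the ``if'' direction I would use Hahn--Banach separation in the Banach space $B(\mathcal A)$ of bounded $\mathcal A$-measurable functions equipped with the sup-norm, together with the classical bijection between elements of $\mathbb P$ and positive linear functionals $\Lambda$ on $B(\mathcal A)$ with $\Lambda(1)=1$. Let $\mathcal K\subseteq B(\mathcal A)$ be the convex cone generated by the nonnegative bounded functions, by $\{X-\phi(X):X\in C\}$ and by $\{I_A-1:A\in\mathcal E\}$. Any continuous linear $\Lambda$ with $\Lambda\ge 0$ on $\mathcal K$ and $\Lambda(1)=1$ will yield the desired $P$ via $P(B)=\Lambda(I_B)$: positivity of $\Lambda$ comes from $\mathcal K$ containing the nonnegative bounded functions, $E_P(X)\ge\phi(X)$ from $X-\phi(X)\in\mathcal K$, and $P(A)=1$ from $I_A-1\in\mathcal K$ together with the automatic bound $P(A)\le 1$.

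The key step will be to show that $\sup_\Omega Y\ge 0$ for every $Y\in\mathcal K$. A generic element of $\mathcal K$ has the form $Y=\sum_i\lambda_i(X_i-\phi(X_i))+\sum_j\mu_j(I_{A_j}-1)+Z$ with $\lambda_i,\mu_j\ge 0$, $X_i\in C$, $A_j\in\mathcal E$ and $Z\ge 0$. Since $\mathcal E$ is closed under finite intersection and every $A_j$ is nonempty, $A:=\bigcap_jA_j$ (or any single $A\in\mathcal E$ if the second sum is empty) lies in $\mathcal E$, and on $A$ the $\mu_j$-terms vanish while $Z\ge 0$. Setting $s=\sum_i\lambda_i$ and, when $s>0$, $X'=s^{-1}\sum_i\lambda_iX_i\in C$, the linearity of $\phi$ on $C$ gives $\sum_i\lambda_i\phi(X_i)=s\phi(X')$, and the hypothesis $\sup_AX'\ge\phi(X')$ yields $\sup_\Omega Y\ge\sup_AY\ge s\bigl(\sup_AX'-\phi(X')\bigr)\ge 0$; the case $s=0$ is trivial.

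Once this is established, the bound $\|Y-(-1)\|_\infty\ge\sup(Y+1)\ge 1$ for every $Y\in\mathcal K$ shows that $-1\notin\overline{\mathcal K}$, so Hahn--Banach separation produces a continuous linear functional $\Lambda$ on $B(\mathcal A)$ with $\Lambda(-1)<\inf_{\mathcal K}\Lambda$. The cone property of $\mathcal K$ then forces $\Lambda\ge 0$ on $\mathcal K$ and $\Lambda(1)>0$; after rescaling, $\Lambda(1)=1$, and the construction above delivers the required $P$. The main obstacle I anticipate is precisely the verification of the sup inequality on $\mathcal K$: both the intersection-closure of $\mathcal E$ and the linearity of $\phi$ on $C$ are used essentially there, in order to reduce an arbitrary element of $\mathcal K$ to a single pair $(A,X')\in\mathcal E\times C$ where the hypothesis $\sup_AX'\ge\phi(X')$ can be applied directly. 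The separation argument itself is routine.
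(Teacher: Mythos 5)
Your argument is correct, but it is organized quite differently from the paper's proof. The paper first fixes a single $A\in\mathcal{E}$ and applies Lemma 1 of Heath and Sudderth \cite{HS} to the convex class $\{X|A-\phi(X):X\in C\}$ of bounded functions on $A$, obtaining some $P_A\in\mathbb{P}$ with $P_A(A)=1$ and $E_{P_A}(X)\geq\phi(X)$ for all $X\in C$; it then glues these together over all of $\mathcal{E}$ by a compactness argument, viewing $\mathbb{P}$ inside $[0,1]^{\mathcal{A}}$ with the product topology and noting that the closed sets $F_A=\{P\in\mathbb{P}:P(A)=1,\ E_P(X)\geq\phi(X)\ \forall X\in C\}$ have the finite intersection property precisely because $\mathcal{E}$ is stable under intersections. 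You instead run one global Hahn--Banach separation in $B(\mathcal{A})$, folding the constraints $P(A)=1$ into the cone $\mathcal{K}$ through the generators $I_A-1$; the intersection-stability of $\mathcal{E}$ is consumed earlier, when you verify $\sup_\Omega Y\geq 0$ on $\mathcal{K}$ by passing to $\bigcap_j A_j\in\mathcal{E}$, so no Tychonoff/finite-intersection step is needed afterwards. Both proofs exploit convexity of $C$ and linearity of $\phi$ in the same way, namely to collapse $\sum_i\lambda_i(X_i-\phi(X_i))$ to a single $s(X'-\phi(X'))$ with $X'\in C$ --- this is essentially what sits inside the Heath--Sudderth lemma. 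Your version is more self-contained (one separation argument instead of an external lemma plus a compactness argument), at the price of invoking the correspondence between positive normalized linear functionals on $B(\mathcal{A})$ and finitely additive probabilities, which requires the remark that bounded measurable functions are uniform limits of simple ones. If you write it up in full, record two small points: $0\in\mathcal{K}$ (take $Z=0$), which is what forces $\Lambda(-1)<\inf_{\mathcal{K}}\Lambda\leq 0$ and hence $\Lambda(1)>0$; and $\mathcal{E}\neq\emptyset$ must be assumed (as the paper also implicitly does), since for $\mathcal{E}=\emptyset$ the stated equivalence fails.
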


\begin{proof}
This is basically \cite[Lemma 2]{BPR1} and so we just give a sketch of the proof. The ``only if'' part is trivial. Suppose $\sup_AX\geq\phi(X)$ for all $A\in\mathcal{E}$ and $X\in C$. Fix $A\in\mathcal{E}$ and define $C_A=\{X|A-\phi(X):X\in C\}$, where $X|A$ denotes the restriction of $X$ on $A$. Then, $C_A$ is a convex class of bounded functions on $A$ and $\sup_A Z\geq 0$ for each $Z\in C_A$. By \cite[Lemma 1]{HS}, there is a finitely additive probability $T$ on the power set of $A$ such that $E_T(Z)\geq 0$ for each $Z\in C_A$. Define
\begin{equation*}
P_A(B)=T(A\cap B)\quad\text{for }B\in\mathcal{A}.
\end{equation*}
Then, $P_A\in\mathbb{P}$, $P_A(A)=1$ and $E_{P_A}(X)=E_T(X|A)\geq\phi(X)$ for each $X\in C$. Next, let $\mathcal{Z}$ be the set of all functions from $\mathcal{A}$ into $[0,1]$, equipped with the product topology, and let
\begin{equation*}
F_A=\bigl\{P\in\mathbb{P}:P(A)=1\text{ and }E_P(X)\geq\phi(X)\text{ for all }X\in C\bigr\}\quad\text{for }A\in\mathcal{E}.
\end{equation*}
Then, $\mathcal{Z}$ is compact and $\{F_A:A\in\mathcal{E}\}$ is a collection of closed sets satisfying the finite intersection property. Hence, $\bigcap_{A\in\mathcal{E}}F_A\neq\emptyset$, and this concludes the proof.
\end{proof}

We next state the portmanteau theorem for ESFA's. Conditions (a)-(b) are already known while conditions (c)-(d) are new. See \cite[Theorem 2]{CAS}, \cite[Theorem 2.1]{ROK} for (a) and \cite[Theorem 3]{BPR1}, \cite[Corollary 1]{ROKSCA} for (b); see also \cite{ROKHL}. Recall that $\mathbb{S}$ denotes the (possibly empty) set of ESFA's and define
\begin{equation*}
\mathcal{Q}=\{Q\in\mathbb{P}_0:Q\sim P_0\}.
\end{equation*}

\begin{thm}\label{t1} Let $L$ be a convex cone of real bounded random variables. Each of the following conditions is equivalent to $\mathbb{S}\neq\emptyset$.

\begin{itemize}

\item[(a)] $\overline{L-L_\infty^+}\,\cap L_\infty^+=\{0\}$, with the closure in the norm-topology of $L_\infty$;

\vspace{0.2cm}

\item[(b)] There are $Q\in\mathcal{Q}$ and a constant $k\geq 0$ such that
\begin{equation*}
E_Q(X)\leq k\,\text{ess sup}(-X)\quad\text{for each }X\in L;
\end{equation*}

\item[(c)] There are events $A_n\in\mathcal{A}$ and constants $k_n\geq 0$, $n\geq 1$, such that
\begin{gather*}
\lim_nP_0(A_n)=1\quad\text{and}
\\E_{P_0}\bigl\{X\,I_{A_n}\bigr\}\leq k_n\,\text{ess sup}(-X)\quad\text{for all }n\geq 1\text{ and }X\in L;
\end{gather*}

\item[(d)] $\bigl\{P_0(X\in\cdot):\,X\in L,\,\,X\geq -1\text{ a.s.}\bigr\}$ is a tight collection of probability laws.

\end{itemize}

\vspace{0.2cm}

\noindent Moreover, under condition (b), an ESFA is
\begin{equation*}
P=\frac{Q+k\,P_1}{1+k}\quad\text{ for a suitable }P_1\in\mathbb{P}.
\end{equation*}

\end{thm}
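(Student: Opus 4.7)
\emph{Overall approach.} I would treat the classical equivalences $\mathbb{S}\neq\emptyset\Leftrightarrow(a)\Leftrightarrow(b)$ as already established (cited above) and handle the new content by closing the cycle $(b)\Rightarrow(c)\Rightarrow(d)\Rightarrow(a)$. Separately, I would exhibit the explicit ESFA $P=(Q+kP_1)/(1+k)$ as the constructive proof of $(b)\Rightarrow\mathbb{S}\neq\emptyset$, via Lemma \ref{kumon}.

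\emph{From (b) to (c) to (d).} First, $(b)$ forces \eqref{nac}: otherwise there is $0\neq X\in L$ with $X\geq 0$ a.s., and then $E_Q(X)>0\leq k\,\text{ess sup}(-X)$ fails. Hence $\text{ess sup}(-X)\geq 0$ and $X^-\leq\text{ess sup}(-X)$ a.s.\ for every $X\in L$. Setting $f=dQ/dP_0>0$ a.s.\ and $A_n=\{f\geq 1/n\}$, one has $P_0(A_n)\to 1$, and since $I_{A_n}/f\leq n$,
\begin{equation*}
E_{P_0}(XI_{A_n})\leq E_{P_0}(X^+I_{A_n})=E_Q(X^+I_{A_n}/f)\leq n\,E_Q(X^+)\leq n(k+1)\,\text{ess sup}(-X),
\end{equation*}
proving $(c)$ with $k_n=n(k+1)$. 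For $(c)\Rightarrow(d)$, if $X\in L$ and $X\geq -1$ a.s., then $\text{ess sup}(-X)\leq 1$, so $E_{P_0}(X^+I_{A_n})\leq k_n+1$; Markov's inequality gives $P_0(X>M,A_n)\leq(k_n+1)/M$, and combining with $P_0(A_n^c)\to 0$ yields tightness uniformly in such $X$.

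\emph{From (d) back to (a): the main obstacle.} I expect this to be the hardest step because it exploits that $L$ is a \emph{cone} in order to amplify a putative nontrivial element of $\overline{L-L_\infty^+}\cap L_\infty^+$. If $(a)$ fails, fix $0\neq Y\in L_\infty^+$ together with $X_n\in L$ and $f_n\in L_\infty^+$ satisfying $\|X_n-f_n-Y\|_\infty<1/n$, so that $X_n\geq Y-1/n\geq -1/n$. Then $nX_n\in L$ and $nX_n\geq -1$ for all $n\geq 1$. Because $Y\neq 0$, we may pick $\alpha>0$ with $\delta:=P_0(Y\geq 2\alpha)>0$; on this event $X_n\geq\alpha$ for $n>1/\alpha$, whence $P_0(nX_n\geq n\alpha)\geq\delta$ while $n\alpha\to\infty$, contradicting the tightness asserted by $(d)$.

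\emph{Explicit form of $P$.} If $k=0$ then $P:=Q$ is itself in $\mathbb{S}$. For $k>0$, I would invoke Lemma \ref{kumon} with $C=-L$ (a convex class), the linear map $\phi(-X)=E_Q(X)/k$, and $\mathcal{E}=\{A\in\mathcal{A}:P_0(A)=1\}$, which is closed under finite intersections. Since $\inf_{A\in\mathcal{E}}\sup_A(-X)=\text{ess sup}(-X)$, the lemma's hypothesis $\sup_A(-X)\geq\phi(-X)$ for all $A\in\mathcal{E}$ reduces exactly to $(b)$. This yields $P_1\in\mathbb{P}$ with $P_1\ll P_0$ and $E_{P_1}(X)\leq -E_Q(X)/k$ for every $X\in L$. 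Then $P=(Q+kP_1)/(1+k)\in\mathbb{P}$ satisfies $E_P(X)\leq 0$ on $L$, and $P\sim P_0$ because $P\geq Q/(1+k)$ and $Q\sim P_0$, completing both the formula and the direction $(b)\Rightarrow\mathbb{S}\neq\emptyset$.
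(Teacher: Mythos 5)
Your proposal is correct, and while its skeleton (rely on the cited equivalences $\mathbb{S}\neq\emptyset\Leftrightarrow$ (a) $\Leftrightarrow$ (b), prove (b) $\Rightarrow$ (c), close a cycle through (d), and extract the explicit $P$ from Lemma \ref{kumon}) matches the paper, two of your implications take a genuinely different route. For (c) $\Rightarrow$ (d) the paper splits into cases according to whether $(k_n)$ admits a subsequence with $k_{n_j}\leq 1$ and, in the main case, builds an auxiliary measure $Q(\cdot)=r^{-1}\sum_{n\geq m}P_0(\cdot\cap A_n)\,k_n^{-1}2^{-n}\in\mathcal{Q}$ and bounds $E_Q\abs{X}$ uniformly over $D=\{X\in L: X\geq -1 \text{ a.s.}\}$; your single fixed-$n$ Markov estimate $P_0(X>M,\,A_n)\leq (k_n+1)/M$ combined with $P_0(A_n^c)\to 0$ is shorter and avoids both the case analysis and the auxiliary measure. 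More substantially, the paper closes the cycle with (d) $\Rightarrow$ (b) via Yan's theorem, which produces a concrete $Q\in\mathcal{Q}$ with $\sup_{X\in D}E_Q(X)<\infty$ and hence the constant $k$ of (b); you instead prove (d) $\Rightarrow$ (a) by contraposition, using the cone structure to rescale approximants $X_n$ of a nonzero $Y\in L_\infty^+$ into elements $nX_n\in D$ whose laws fail to be tight. Your argument is elementary and avoids Yan's theorem at this step, at the price of leaning on the external Kreps--Yan-type equivalence (a) $\Leftrightarrow\mathbb{S}\neq\emptyset$ to re-enter the cycle, whereas the paper's (d) $\Rightarrow$ (b) has the advantage of feeding directly into the constructive final claim $P=(Q+kP_1)/(1+k)$. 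Your proofs of (b) $\Rightarrow$ (c) and of the explicit form of $P$ coincide with the paper's (note only the slip ``$0\leq k\,\text{ess sup}(-X)$'', which should read ``$0\geq k\,\text{ess sup}(-X)$'' in your verification that (b) implies \eqref{nac}).
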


\begin{proof} First note that each of conditions (b)-(c)-(d) implies \eqref{nac}, which in turn implies
\begin{equation*}
\text{ess sup}(X^-)=\text{ess sup}(-X)>0\quad\text{whenever }X\in L\text{ and }P_0(X\neq 0)>0.
\end{equation*}

{\bf (b) $\Rightarrow$ (c).} Suppose (b) holds. Define $k_n=n\,(k+1)$ and $A_n=\{n\,f\geq 1\}$, where $f$ is a density of $Q$ with respect to $P_0$. Since $f>0$ a.s., then $P_0(A_n)\rightarrow 1$. Further, condition (b) yields
\begin{gather*}
E_{P_0}\bigl\{X\,I_{A_n}\bigr\}\leq E_{P_0}\bigl\{X^+\,I_{A_n}\bigr\}=E_Q\bigl\{X^+\,(1/f)\,I_{A_n}\bigr\}\leq n\,E_Q(X^+)
\\=n\,\bigl\{E_Q(X)+E_Q(X^-)\bigr\}\leq n\,\bigl\{k\,\text{ess sup}(-X)+\text{ess sup}(X^-)\bigr\}
\\=k_n\,\text{ess sup}(-X)\quad\text{for all }n\geq 1\text{ and }X\in L.
\end{gather*}

{\bf (c) $\Rightarrow$ (d).} Suppose (c) holds and define $D=\{X\in L:X\geq -1\text{ a.s.}\}$. If there is a subsequence $n_j$ such that $k_{n_j}\leq 1$ for all $j$, taking the limit as $j\rightarrow\infty$  condition (c) yields $E_{P_0}(X)\leq\text{ess sup}(-X)$ for all $X\in L$. Given $X\in D$, since $\text{ess sup}(-X)\leq 1$ and $X+1\geq 0$ a.s., one obtains
\begin{gather*}
E_{P_0}\abs{X}\leq 1+E_{P_0}(X+1)\leq 2+\text{ess sup}(-X)\leq 3.
\end{gather*}
Hence, $\bigl\{P_0(X\in\cdot):\,X\in D\bigr\}$ is tight. Suppose now that $k_n> 1$ for large $n$, say $k_n> 1$ for each $n\geq m$. Note that $r=\sum_{n=m}^\infty P_0(A_n)\,k_n^{-1}2^{-n}<\infty$ and define
\begin{equation*}
Q(\cdot)=\frac{1}{r}\,\sum_{n=m}^\infty\frac{P_0(\cdot\cap A_n)}{k_n\,2^n}.
\end{equation*}
Then, $Q\in\mathcal{Q}$. Arguing as above, condition (c) implies
\begin{gather*}
E_Q\abs{X}\leq 1+E_Q(X+1)=2+\frac{1}{r}\,\sum_{n=m}^\infty\,\frac{E_{P_0}\bigl\{X\,I_{A_n}\bigr\}}{k_n\,2^n}
\\\leq 2+\frac{1}{r}\,\sum_{n=m}^\infty\frac{\text{ess sup}(-X)}{2^n}\leq 2+\frac{1}{r}\quad\text{for each }X\in D.
\end{gather*}
Thus, $\bigl\{Q(X\in\cdot):\,X\in D\bigr\}$ is tight. Since $Q\sim P_0$, then $\bigl\{P_0(X\in\cdot):\,X\in D\bigr\}$ is tight as well.

{\bf (d) $\Rightarrow$ (b).} Suppose (d) holds. By a result of Yan \cite{YAN}, there is $Q\in\mathcal{Q}$ such that $k:=\sup_{X\in D}E_Q(X)<\infty$, where $D$ is defined as above. Fix $X\in L$ with $P_0(X\neq 0)>0$ and let $Y=X/\text{ess sup}(-X)$. Since $Y\in D$, one obtains
\begin{equation*}
E_Q(X)=E_Q(Y)\,\text{ess sup}(-X)\leq k\,\text{ess sup}(-X).
\end{equation*}

Thus, (b) $\Leftrightarrow$ (c) $\Leftrightarrow$ (d). This concludes the proof of the first part of the theorem, since it is already known that (b) $\Leftrightarrow$ (a) $\Leftrightarrow$ $\mathbb{S}\neq\emptyset$.

Finally, suppose (b) holds for some $Q\in\mathcal{Q}$ and $k\geq 0$. It remains to show that $P=(1+k)^{-1}(Q+k\,P_1)\in\mathbb{S}$ for some $P_1\in\mathbb{P}$. If $k=0$, then $Q\in\mathbb{S}$ and $P=Q$. Thus, suppose $k>0$ and define
\begin{equation*}
C=\{-X:X\in L\},\quad \phi(Z)=-(1/k)\,E_Q(Z)\text{ for }Z\in C,\quad\mathcal{E}=\{A\in\mathcal{A}:P_0(A)=1\}.
\end{equation*}
Given $A\in\mathcal{E}$ and $Z\in C$, since $-Z\in L$ condition (b) yields
\begin{equation*}
\phi(Z)=(1/k)\,E_Q(-Z)\leq\text{ess sup}(Z)\leq\sup_AZ.
\end{equation*}
By Lemma \ref{kumon}, there is $P_1\in\mathbb{P}$ such that $P_1\ll P_0$ and $E_{P_1}(X)\leq -(1/k)\,E_Q(X)$ for all $X\in L$. Since $Q\sim P_0$ and $P_1\ll P_0$, then $P=(1+k)^{-1}(Q+k\,P_1)\sim P_0$. Further,
\begin{equation*}
(1+k)\,E_P(X)=E_Q(X)+k\,E_{P_1}(X)\leq 0\quad\text{for all }X\in L.
\end{equation*}

\end{proof}

Since $L\subset L_\infty$, condition \eqref{nac} can be written as $(L-L_\infty^+)\cap L_\infty^+=\{0\}$. Thus, condition (a) can be seen as a no-arbitrage condition. One more remark is in order. Let $\sigma(L_\infty,L_1)$ denote the topology on $L_\infty$ generated by the maps $Z\mapsto E_{P_0}\bigl(Y\,Z)$ for all $Y\in L_1$. In the early eighties, Kreps and Yan proved that the existence of an ESM amounts to
\begin{equation}\label{g6f}
\overline{L-L_\infty^+}\,\cap L_\infty^+=\{0\}\quad\text{with the closure in }\sigma(L_\infty,L_1);\tag{a*}
\end{equation}
see \cite{K}, \cite{S} and \cite{YAN}. But the geometric meaning of $\sigma(L_\infty,L_1)$ is not so transparent as that of the norm-topology. Hence, a question is what happens if the closure is taken in the norm-topology, that is, if \eqref{g6f} is replaced by (a). The answer, due to \cite[Theorem 2]{CAS} and \cite[Theorem 2.1]{ROK}, is reported in Theorem \ref{t1}.

Note also that, since $L\subset L_\infty$, condition (a) agrees with the {\em no free lunch with vanishing
risk} condition of Delbaen and Schachermayer \cite{DS94}:
\begin{equation*}
\overline{(L-L_0^+)\cap L_\infty}\,\cap L_\infty^+=\{0\}\quad\text{with the closure in the norm-topology}.
\end{equation*}
However, in \cite{DS94}, $L$ is a suitable class of stochastic integrals (in a fixed time interval and driven by a fixed semi-martingale) while $L$ is any convex cone of bounded random variables in Theorem \ref{t1}. Further, the equivalence between $\mathbb{S}\neq\emptyset$ and the no free lunch with vanishing
risk condition is no longer true when $L$ may include unbounded random variables; see Example \ref{nflvr5}.

Let us turn to (b). Once $Q\in\mathcal{Q}$ has been selected, condition (b) provides a simple criterion for $\mathbb{S}\neq\emptyset$. However, choosing $Q$ is not an easy task. The obvious choice is perhaps $Q=P_0$.

\begin{cor}\label{v7t} Let $L$ be a convex cone of real bounded random variables. Condition (b) holds with $Q=P_0$, that is
\begin{equation*}
E_{P_0}(X)\leq k\,\text{ess sup}(-X)\quad\text{for all }X\in L\text{ and some constant }k\geq 0,
\end{equation*}
if and only if there is $P\in\mathbb{S}$ such that $P\geq r\,P_0$ for some constant $r>0$.
\end{cor}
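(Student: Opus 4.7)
The plan is to deduce both directions from Theorem~\ref{t1} specialized to $Q=P_0$, together with the decomposition $P=r\,P_0+\mu$ where $\mu:=P-r\,P_0$.

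For the forward direction, the hypothesis is precisely condition (b) of Theorem~\ref{t1} with $Q=P_0$; its concluding statement then produces an ESFA of the form $P=(P_0+k\,P_1)/(1+k)$ with $P_1\in\mathbb{P}$. Since $P_1$ is a nonnegative set function, $P\ge (1+k)^{-1}P_0$, so I will simply set $r=1/(1+k)>0$.

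For the reverse direction, suppose $P\in\mathbb{S}$ with $P\ge r\,P_0$. Evaluating at $\Omega$ forces $r\le 1$, and $\mu:=P-r\,P_0$ is a nonnegative finitely additive measure on $\mathcal{A}$ of total mass $1-r$, absolutely continuous with respect to $P_0$ because $P\sim P_0$. Consequently, for every bounded random variable $Z$, the a.s.\ bound $Z\le\text{ess sup}(Z)$ transfers to $\mu$, yielding $E_\mu(Z)\le (1-r)\,\text{ess sup}(Z)$. Now, for a fixed $X\in L$, I split and estimate:
\begin{equation*}
E_{P_0}(X)=\frac{E_P(X)-E_\mu(X)}{r}\le\frac{E_\mu(-X)}{r}\le\frac{E_\mu(X^-)}{r}\le\frac{1-r}{r}\,\text{ess sup}(X^-),
\end{equation*}
using $E_P(X)\le 0$ and $-X\le X^-$. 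Setting $k:=(1-r)/r\ge 0$ delivers (b) with $Q=P_0$, once $\text{ess sup}(X^-)$ is identified with $\text{ess sup}(-X)$.

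The only slightly delicate step, and the main obstacle, is precisely that identification: if $\text{ess sup}(-X)<0$ then $X\ge\delta>0$ a.s.\ for some $\delta$, which would force $E_P(X)\ge\delta>0$ and contradict $P\in\mathbb{S}$. Hence $\text{ess sup}(-X)\ge 0$ for every $X\in L$, and then the definition $X^-=\max(-X,0)$ together with the a.s.\ bound on $-X$ gives $\text{ess sup}(X^-)=\text{ess sup}(-X)$, completing the proof.
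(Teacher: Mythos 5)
Your proof is correct, and both directions follow the same strategy as the paper: the forward direction is verbatim the paper's (invoke the final clause of Theorem~\ref{t1} with $Q=P_0$ to get $P=(1+k)^{-1}(P_0+k\,P_1)\geq(1+k)^{-1}P_0$), and the reverse direction rests on the same two facts ($P\geq r\,P_0$ and $E_P(X)\leq 0$) combined with the identification $\text{ess sup}(X^-)=\text{ess sup}(-X)$. The only genuine difference is the bookkeeping in the reverse direction: the paper estimates along the chain $E_{P_0}(X)\leq E_{P_0}(X^+)\leq r^{-1}E_P(X^+)\leq r^{-1}E_P(X^-)\leq r^{-1}\,\text{ess sup}(X^-)$, applying the domination $P\geq r\,P_0$ only to the nonnegative variable $X^+$ and obtaining $k=1/r$, whereas you introduce the excess measure $\mu=P-r\,P_0$ and bound $E_\mu(X^-)\leq(1-r)\,\text{ess sup}(X^-)$, which yields the marginally sharper constant $k=(1-r)/r$. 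This buys nothing essential since the corollary only asserts existence of some $k\geq 0$, but it is a clean variant; your explicit verification that $\text{ess sup}(-X)\geq 0$ (hence $\text{ess sup}(X^-)=\text{ess sup}(-X)$), using $P\sim P_0$ together with $E_P(X)\leq 0$, correctly fills in a step the paper states without detail.
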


\begin{proof} Let $P\in\mathbb{S}$ be such that $P\geq r\,P_0$. Fix $X\in L$. Since $E_P(X)\leq 0$, then $E_P(X^+)\leq E_P(X^-)$ and $\text{ess sup}(X^-)=\text{ess sup}(-X)$. Hence,
\begin{gather*}
E_{P_0}(X)\leq E_{P_0}(X^+)\leq (1/r)\,E_P(X^+)\leq (1/r)\,E_P(X^-)
\\\leq (1/r)\,\text{ess sup}(X^-)=(1/r)\,\text{ess sup}(-X).
\end{gather*}
Conversely, if condition (b) holds with $Q=P_0$, Theorem \ref{t1} implies that \linebreak $P=(1+k)^{-1}(P_0+kP_1)\in\mathbb{S}$ for suitable $P_1\in\mathbb{P}$. Thus, $P\geq (1+k)^{-1}P_0$.
\end{proof}

Condition (c) is in the spirit of Corollary \ref{v7t} (to avoid the choice of $Q$). It is a sort of localized version of (b), where $Q$ is replaced by a suitable sequence $A_n$ of events. See also \cite[Theorem 5]{BPR2}.

The meaning of condition (d) is quite transparent for those familiar with weak convergence of probability measures. Among other things, (d) depends on $P_0$ only and one of its versions works nicely when $L$ includes unbounded random variables; see Subsection \ref{unb67tyh}. Moreover, condition (d) can be regarded as a no-arbitrage condition. Indeed, basing on \cite[Lemma 2.3]{BS}, it is not hard to see that (d) can be rewritten as:
\begin{itemize}
\item[ ] For each $Z\in L_0^+$, $P_0(Z>0)>0$, there is a constant $a>0$ such that
\begin{equation*}
P_0\bigl(X+1<a\,Z\bigr)>0\quad\text{whenever }X\in L\text{ and }X\geq -1\text{ a.s.}
\end{equation*}
\end{itemize}
Such condition is a market viability condition, called {\em no-arbitrage of the first kind}, investigated by Kardaras in \cite{KARD}-\cite{KARD12}.

\subsection{The unbounded case}\label{unb67tyh}

In dealing with ESFA's, it is crucial that $L\subset L_\infty$. In fact, all arguments (known to us) for existence of ESFA's are based on de Finetti's coherence principle, but the latter works nicely for bounded random variables only. More precisely, the existing notions of coherence for unbounded random variables do not grant a (finitely additive) integral representation; see \cite{BR}  and \cite{BRR01}. On the other hand, $L\subset L_\infty$ is certainly a restrictive assumption. In this Subsection, we try to relax such assumption.

Our strategy for proving $\mathbb{S}\neq\emptyset$ is to exploit condition (d) of Theorem \ref{t1}. To this end, we need a dominance condition on $L$, such as
\begin{equation}\label{d5c}
\text{for each }X\in L,\text{ there is }\lambda>0\text{ such that }\abs{X}\leq\lambda\,Y\,\text{ a.s.}
\end{equation}
where $Y$ is some real random variable. We can (and will) assume $Y\geq 1$.

Condition \eqref{d5c} is less strong than it appears. For instance, it is always true when $L$ is countably generated. In fact, if $L$ is the convex cone generated by a sequence $(X_n:n\geq 1)$ of real random variables, it suffices to let $Y_n=\sum_{i=1}^n\abs{X_i}\,$ in the following lemma.

\begin{lem}
If $Y_1,Y_2,\ldots$ are non negative real random variables satisfying
\begin{equation*}
\text{for each }X\in L,\text{ there are }\lambda>0\text{ and }n\geq 1\text{ such that }\abs{X}\leq\lambda\,Y_n\,\text{ a.s.},
\end{equation*}
then condition \eqref{d5c} holds for some real random variable $Y$.
\end{lem}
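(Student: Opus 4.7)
The plan is to manufacture a single dominating $Y$ as a weighted, rapidly decaying sum of the $Y_n$. The intuition is that each $Y_n$ is a.s.\ finite, so $P_0(Y_n>a)\to 0$ as $a\to\infty$; by scaling each $Y_n$ down hard enough I can make the resulting series converge a.s., and because $Y$ dominates a positive multiple of every term it dominates every $Y_n$ (up to a scalar), hence every $X\in L$.

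Concretely, for each $n\geq 1$ I would first choose $a_n\geq 1$ large enough that $P_0(Y_n>a_n)<2^{-n}$, which is possible since $Y_n<\infty$ a.s.\ implies $P_0(Y_n>a)\downarrow 0$ as $a\uparrow\infty$. Then I would set
\begin{equation*}
Y \;=\; 1+\sum_{n=1}^{\infty}\frac{Y_n}{2^{n}a_n}.
\end{equation*}
A Borel--Cantelli argument applied to the events $\{Y_n>a_n\}$ shows that a.s.\ $Y_n/(2^n a_n)\le 2^{-n}$ for all large $n$, so the series converges a.s.\ and $Y<\infty$ a.s. To obtain a genuine real random variable (and not just an a.s.-finite one) I would redefine $Y$ to be $1$ on the null set $\{Y=\infty\}$; the a.s.\ identity $Y=1+\sum_n 2^{-n}a_n^{-1}Y_n$ is unaffected, and now $Y\geq 1$ everywhere.

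The domination step is then automatic: for every fixed $n$, $Y\geq (2^n a_n)^{-1}Y_n$ a.s., hence $Y_n\leq 2^n a_n\,Y$ a.s. Given $X\in L$, pick the $\lambda>0$ and $n\geq 1$ supplied by the hypothesis; then $\abs{X}\leq \lambda Y_n\leq \lambda\,2^n a_n\,Y$ a.s., proving \eqref{d5c} with constant $\lambda\,2^n a_n$. The only point that requires any care is the passage from ``a.s.\ finite'' to ``real-valued,'' handled by the redefinition on the negligible set; everything else is bookkeeping once the $a_n$ have been chosen.
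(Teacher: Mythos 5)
Your proposal is correct and is essentially the paper's own argument: the same choice of $a_n$ with $P_0(Y_n>a_n)<2^{-n}$, the same series $Y=1+\sum_n Y_n/(2^n a_n)$, and the same Borel--Cantelli reasoning (the paper phrases it via the full-measure set $A=\bigcup_n\{Y_j\leq a_j\text{ for all }j\geq n\}$ on which the series converges). Your explicit redefinition of $Y$ on the null set where the series diverges is a minor bookkeeping point the paper leaves implicit.
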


\begin{proof}
For each $n\geq 1$, take $a_n>0$ such that $P_0(Y_n>a_n)<2^{-n}$ and define $A=\bigcup_{n=1}^\infty\bigl\{Y_j\leq a_j\text{ for each }j\geq n\bigr\}$. Then,
\begin{equation*}
P_0(A)=1\quad\text{and}\quad Y=1+\sum_{n=1}^\infty\frac{Y_n}{2^na_n}<\infty\,\text{ on }A.
\end{equation*}
Also, condition \eqref{d5c} holds trivially, since $2^na_n\,Y>Y_n$ on $A$ for each $n\geq 1$.
\end{proof}

Next result applies to those convex cones $L$ satisfying condition \eqref{d5c}. It provides a sufficient (sometimes necessary as well) criterion for $\mathbb{S}\neq\emptyset$.

\begin{cor}\label{fol7f3b}
Suppose condition \eqref{d5c} holds for some convex cone $L$ and some random variable $Y$ with values in $[1,\infty)$. Then, $\mathbb{S}\neq\emptyset$ provided
\begin{gather}\label{tight63}
\text{for each }\epsilon>0,\text{ there is }c>0\text{ such that }
\\P_0\bigl(\abs{X}>c\,Y\bigr)<\epsilon\quad\text{whenever }X\in L\text{ and }X\geq -Y\text{ a.s. }\notag
\end{gather}
Conversely, condition \eqref{tight63} holds if $\mathbb{S}\neq\emptyset$ and $Y$ is $P$-integrable for some $P\in\mathbb{S}$.
\end{cor}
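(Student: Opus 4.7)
The plan is to reduce to Theorem \ref{t1} via the change of variables $X \mapsto X/Y$. Set $L' := \{X/Y : X \in L\}$; because $Y \geq 1$ and $|X| \leq \lambda Y$ a.s.\ by \eqref{d5c}, $L'$ is a convex cone of bounded random variables. Since $Y > 0$ a.s., the constraint $X \geq -Y$ a.s.\ is equivalent to $X/Y \geq -1$ a.s., and $\{|X| > cY\} = \{|X/Y| > c\}$. Therefore condition \eqref{tight63} is precisely condition (d) of Theorem \ref{t1} applied to $L'$ with reference $P_0$, and by that theorem it is equivalent to the existence of an ESFA for $L'$.

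For the sufficient direction, Theorem \ref{t1} provides an ESFA $P$ for $L'$ which, by the ``Moreover'' clause, may be chosen of the form $P = (Q + k P_1)/(1+k)$ with $Q \in \mathcal{Q}$, $k \geq 0$ and $P_1 \in \mathbb{P}$. I would then construct an ESFA for $L$ by the change of density $dP^* \propto (1/Y)\,dP$, namely
\[
P^*(A) = \frac{E_P((1/Y) I_A)}{E_P(1/Y)}, \qquad A \in \mathcal{A}.
\]
The denominator is strictly positive: $P \geq Q/(1+k)$ together with $\sigma$-additivity of $Q \sim P_0$ and the strict positivity of $1/Y$ a.s.\ give $E_P(1/Y) \geq E_Q(1/Y)/(1+k) > 0$. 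Using the bounded-case identity $E_{P^*}(Z) = E_P(Z/Y)/E_P(1/Y)$ for $Z \in L_\infty$ and the estimate $|X|/Y \leq \lambda$, I would then verify that each $X \in L$ is $P^*$-integrable with $E_{P^*}(|X|) \leq \lambda/E_P(1/Y) < \infty$ and that $E_{P^*}(X) \leq 0$, this last inequality coming from $E_P(X/Y) \leq 0$ since $X/Y \in L'$.

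The main obstacle I expect is the verification $P^* \sim P_0$. The direction $P_0(A) = 0 \Rightarrow P^*(A) = 0$ is routine. For the converse, $P^*(A) = 0$ forces $E_P((1/Y) I_A) = 0$, and then $P \geq Q/(1+k)$ gives $E_Q((1/Y) I_A) = 0$; since $Q$ is $\sigma$-additive with $Q \sim P_0$ and $1/Y > 0$ a.s., this yields $Q(A) = 0$ and hence $P_0(A) = 0$. The $\sigma$-additive ingredient $Q$ of the decomposition is genuinely needed here: for a generic finitely additive $P \sim P_0$ one may have $E_P(1/Y) = 0$ although $1/Y > 0$ everywhere.

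For the converse direction, assume $P \in \mathbb{S}$ and $Y$ is $P$-integrable. I would reverse the construction by setting $Q(A) = E_P(Y I_A)/E_P(Y)$. Since $Y \geq 1$ one has $E_P(Y I_A) \geq P(A)$, and $P(A) = 0$ implies $E_P(Y I_A) = 0$; hence $Q \sim P$, so $Q \sim P_0$. For each $X \in L$ the random variable $X/Y$ is bounded and
\[
E_Q(X/Y) = E_P(Y \cdot X/Y)/E_P(Y) = E_P(X)/E_P(Y) \leq 0,
\]
so $Q$ is an ESFA for $L'$. Theorem \ref{t1} applied to $L'$ then delivers its condition (d), which is precisely \eqref{tight63}.
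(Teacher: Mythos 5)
Your proof follows essentially the same route as the paper's: reduce to condition (d) of Theorem \ref{t1} for the cone $\{X/Y:X\in L\}$, use the $\sigma$-additive component of the resulting ESFA to guarantee $E_P(1/Y)>0$, pass to the density $1/Y$ to build an ESFA for $L$, and reverse the construction with density $Y$ for the converse. The only point worth flagging is that $X/Y$ is merely \emph{essentially} bounded (since $\abs{X}\leq\lambda\,Y$ only a.s.), so strictly speaking you need the observation, made explicitly at the start of the paper's proof, that Theorem \ref{t1} remains valid for convex cones of essentially bounded random variables.
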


\begin{proof} First note that Theorem \ref{t1} is still valid if each member of the convex cone is essentially bounded (even if not bounded). Let $L^*=\{X/Y:X\in L\}$. Then, $L^*$ is a convex cone of essentially bounded random variables and condition \eqref{tight63} is equivalent to tightness of $\bigl\{P_0(Z\in\cdot):\,Z\in L^*,\,Z\geq -1$ a.s.$\bigr\}$. Suppose \eqref{tight63} holds. By Theorem \ref{t1}-(d), $L^*$ admits an ESFA, i.e., there is $T\in\mathbb{P}$ such that $T\sim P_0$ and $E_T(Z)\leq 0$ for all $Z\in L^*$. As noted at the beginning of this Section, such a $T$ can be written as $T=\delta\,P_1+(1-\delta)\,Q$, where $\delta\in [0,1)$, $P_1\in\mathbb{P}$ and $Q\in\mathcal{Q}$. Since $Y\geq 1$,
\begin{equation*}
0<(1-\delta)\,E_Q(1/Y)\leq E_T(1/Y)\leq 1.
\end{equation*}
Accordingly, one can define
\begin{equation*}
P(A)=\frac{E_T\bigl(I_A/Y\bigr)}{E_T(1/Y)}\quad\text{for all }A\in\mathcal{A}.
\end{equation*}
Then, $P\in\mathbb{P}$, $P\sim P_0$, each $X\in L$ is $P$-integrable, and
\begin{equation*}
E_P(X)=\frac{E_T\bigl\{X/Y\bigr\}}{E_T(1/Y)}\leq 0\quad\text{for all }X\in L.
\end{equation*}
Thus, $P\in\mathbb{S}$. Next, suppose $\mathbb{S}\neq\emptyset$ and $Y$ is $P$-integrable for some $P\in\mathbb{S}$. Define
\begin{equation*}
T(A)=\frac{E_P\bigl\{I_A\,Y\bigr\}}{E_P(Y)}\quad\text{for all }A\in\mathcal{A}.
\end{equation*}
Again, one obtains $T\in\mathbb{P}$, $T\sim P_0$ and $E_T(Z)\leq 0$ for all $Z\in L^*$. Therefore, condition \eqref{tight63} follows from Theorem \ref{t1}-(d).
\end{proof}

By Corollary \ref{fol7f3b}, $\mathbb{S}\neq\emptyset$ amounts to condition \eqref{tight63} when $L$ is finite dimensional. In fact, if $L$ is the convex cone generated by the random variables $X_1,\ldots,X_d$, condition \eqref{d5c} holds with $Y=1+\sum_{i=1}^d\abs{X_i}\,$ and such $Y$ is certainly $P$-integrable if $P\in\mathbb{S}$. The case of $L$ finite dimensional, however, is better addressed in the next two Sections; see Theorem \ref{p3b} and Example \ref{i8uh}.

\section{Equivalent super-martingale measures} If suitably strengthened, some of the conditions of Theorem \ref{t1} become equivalent to existence of ESM's. One example is condition (a) (just replace it by \eqref{g6f}). Another example, as we prove in this Section, is condition (b).

Our main result provides a necessary and sufficient condition for ESM's to exist. Such condition looks potentially useful in real problems (at least when applied with $Q=P_0$). Furthermore, unlike Theorem \ref{t1}, $L$ is not requested to consist of bounded random variables.

Recall the notation $\mathcal{Q}=\{Q\in\mathbb{P}_0:Q\sim P_0\}$.

\begin{thm}\label{p3b} Let $L$ be a convex cone of real random variables. There is an ESM if and only if
\begin{gather}\label{bn5r4}
E_Q\abs{X}<\infty\quad\text{and}\quad E_Q(X)\leq k\,E_Q(X^-),\quad X\in L,\tag{b*}
\end{gather}
for some $Q\in\mathcal{Q}$ and some constant $k\geq 0$. Moreover, there is an ESM $P$ such that
\begin{gather*}
r\,P_0\leq P\leq s\,P_0,\quad\text{for some constants }0<r\leq s,
\end{gather*}
if and only if condition \eqref{bn5r4} holds with $Q=P_0$, that is
\begin{gather*}
E_{P_0}\abs{X}<\infty\quad\text{and}\quad E_{P_0}(X)\leq k\,E_{P_0}(X^-)\quad\text{for all }X\in L.
\end{gather*}
\end{thm}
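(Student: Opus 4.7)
The only-if directions are straightforward verifications, so I dispatch them first. If $P$ is an ESM, setting $Q=P$ gives $Q\in\mathcal Q$ and $E_Q(X)=E_P(X)\leq 0\leq k\,E_Q(X^-)$ for any $k\geq 0$, establishing (b*) with $k=0$. If moreover $r\,P_0\leq P\leq s\,P_0$ with density $g=dP/dP_0\in[r,s]$, then
\[
0\geq E_P(X)=E_{P_0}(gX^+)-E_{P_0}(gX^-)\geq r\,E_{P_0}(X^+)-s\,E_{P_0}(X^-),
\]
which rearranges to (b*) with $Q=P_0$ and $k=(s-r)/r$, while $E_{P_0}\abs{X}\leq r^{-1}E_P\abs{X}<\infty$ since $g\geq r$ forces $dP_0\leq r^{-1}dP$.

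For the converse, I would reduce the first equivalence to the moreover statement: since $Q\sim P_0$, an ESM relative to $P_0$ coincides with an ESM relative to $Q$, so if (b*) holds for some $Q\in\mathcal Q$, proving the moreover statement with $Q$ playing the role of $P_0$ automatically produces an ESM $P$ satisfying $(1+k)^{-1}Q\leq P\leq(1+k)\,Q$. Hence it suffices to handle the case $Q=P_0$.

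Assuming (b*) with $Q=P_0$, my goal is to find $g^*\in L_\infty(P_0)$ with $1\leq g^*\leq 1+k$ and $E_{P_0}(g^*X)\leq 0$ for every $X\in L$; then $P:=g^*\,P_0/E_{P_0}(g^*)$ is a $\sigma$-additive probability equivalent to $P_0$, each $X\in L$ is $P$-integrable (since $g^*$ is bounded and $X\in L_1(P_0)$), $E_P(X)\leq 0$, and $1\leq E_{P_0}(g^*)\leq 1+k$ yields $(1+k)^{-1}P_0\leq P\leq(1+k)\,P_0$. I would construct $g^*$ via Sion's minimax theorem applied to the bilinear form $f(X,g)=E_{P_0}(gX)$ on $D\times K$, where
\[
D=\{X\in L:E_{P_0}(X^-)\leq 1\},\qquad K=\{g\in L_\infty(P_0):1\leq g\leq 1+k\};
\]
$D$ is convex in $L_1(P_0)$ and $K$ is convex and weak-* compact by Banach-Alaoglu. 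For each $X\in D$ the inner minimum $\inf_{g\in K}f(X,g)$ is attained by $g=I_{\{X\geq 0\}}+(1+k)\,I_{\{X<0\}}$, with value $E_{P_0}(X^+)-(1+k)\,E_{P_0}(X^-)\leq 0$ by (b*). Sion's theorem, applicable because $f$ is bilinear and weak-* continuous in $g$ for each fixed $X\in L_1(P_0)$, then yields the minimax equality
\[
\inf_{g\in K}\sup_{X\in D}f(X,g)=\sup_{X\in D}\inf_{g\in K}f(X,g)\leq 0,
\]
and weak-* compactness of $K$ combined with weak-* lower semicontinuity of $g\mapsto\sup_{X\in D}f(X,g)$ forces the outer infimum to be attained at some $g^*\in K$. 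Positive homogeneity of $L$, together with the observation that (b*) forces $X=0$ a.s.\ whenever $X\in L$ and $X\geq 0$ a.s., extends $E_{P_0}(g^*X)\leq 0$ from $D$ to the whole of $L$.

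The main obstacle is the minimax step: one must carefully verify the topological hypotheses of Sion's theorem in this $L_1$/$L_\infty$ pairing, and then exploit weak-* compactness of $K$ a second time to upgrade the outer infimum to an attained minimum, thereby extracting an explicit $g^*$ rather than merely an approximate one.
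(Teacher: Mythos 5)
Your proof is correct, but it reaches the conclusion by a genuinely different route from the paper's. Both arguments share the same skeleton at one point: for a single $X\in L$ you both use the extremal density $I_{\{X\geq 0\}}+(1+k)\,I_{\{X<0\}}$, whose expectation against $X$ is $E_Q(X)-k\,E_Q(X^-)\leq 0$ by \eqref{bn5r4}; and your normalization constant $E_{P_0}(g^*)\in[1,1+k]$ reproduces the paper's bounds $(1+k)^{-1}Q\leq P\leq (1+k)\,Q$. The divergence is in how the single-constraint statement is upgraded to all of $L$ simultaneously. The paper stays in the space $[0,1]^{\mathcal{A}}$ of set functions with the product topology: it shows that $\mathcal{K}=\{P\in\mathbb{P}_0:(1+k)^{-1}Q\leq P\leq (1+k)Q\}$ is compact there and that each $\{P\in\mathcal{K}:E_P(X)\leq 0\}$ is closed (via a uniform-integrability estimate), reducing everything to the finite intersection property, which is then settled by a finite-dimensional separating-hyperplane argument \`a la Kemperman exploiting the cone structure of $L$. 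You instead work directly with densities in $L_\infty$, get compactness of $K$ from Banach--Alaoglu, and let one application of Sion's minimax theorem on $D\times K$ absorb both the reduction to finitely many constraints and the separation step; the extension from $D$ to $L$ by positive homogeneity (plus the observation that \eqref{bn5r4} kills nonnegative elements of $L$) is exactly right. Your route is shorter and has the advantage that $\sigma$-additivity is automatic, since you never leave the set of densities, whereas the paper must check that pointwise limits of elements of $\mathcal{K}$ remain $\sigma$-additive (which it does via the domination $P\leq tQ$); the price is the heavier functional-analytic input ($L_1$--$L_\infty$ duality and Sion), where the paper needs only Tychonoff and elementary convexity in $\mathbb{R}^n$. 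Your preliminary reduction of the general $Q\in\mathcal{Q}$ case to $Q=P_0$ is legitimate and is what the paper does implicitly by formulating $\mathcal{K}$ relative to $Q$; your only-if computations likewise match the paper's, with the marginally sharper constant $k=(s-r)/r$ in place of $s/r$.
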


\begin{proof} If there is an ESM, say $P$, condition \eqref{bn5r4} trivially holds with $Q=P$ and any $k\geq 0$. Conversely, suppose \eqref{bn5r4} holds for some $k\geq 0$ and $Q\in\mathcal{Q}$. Define $t=k+1$ and
\begin{gather*}
\mathcal{K}=\bigl\{P\in\mathbb{P}_0:(1/t)\,Q\leq P\leq t\,Q\bigr\}.
\end{gather*}
If $P\in\mathcal{K}$, then $P\in\mathbb{P}_0$, $P\sim Q\sim P_0$ and $E_P\abs{X}\leq t\,E_Q\abs{X}<\infty$ for all $X\in L$. Thus, it suffices to see that $E_P(X)\leq 0$ for some $P\in\mathcal{K}$ and all $X\in L$.

We first prove that, for each $X\in L$, there is $P\in\mathcal{K}$ such that $E_P(X)\leq 0$. Fix $X\in L$ and define $P(A)=E_Q\bigl\{f\,I_A\bigr\}$ for all $A\in\mathcal{A}$, where
\begin{gather*}
f=\frac{I_{\{X\geq 0\}}+t\,I_{\{X< 0\}}}{Q(X\geq 0)+t\,Q(X< 0)}.
\end{gather*}
Since $E_Q(f)=1$ and $(1/t)\leq f\leq t$, then $P\in\mathcal{K}$. Further, condition \eqref{bn5r4} implies
\begin{gather*}
E_P(X)=E_Q\bigl\{f\,X\bigr\}=\frac{E_Q(X^+)-t\,E_Q(X^-)}{Q(X\geq 0)+t\,Q(X< 0)}=\frac{E_Q(X)-k\,E_Q(X^-)}{Q(X\geq 0)+t\,Q(X< 0)}\leq 0.
\end{gather*}

Next, let $\mathcal{Z}$ be the set of all functions from $\mathcal{A}$ into $[0,1]$, equipped with the product topology. Then,
\begin{gather}\label{b2fg3}
\mathcal{K}\text{ is compact and }\,\{P\in\mathcal{K}:E_P(X)\leq 0\}\,\text{ is closed for each }X\in L.
\end{gather}
To prove \eqref{b2fg3}, we fix a net $(P_\alpha)$ of elements of $\mathcal{Z}$ converging to $P\in\mathcal{Z}$, that is, $P_\alpha(A)\rightarrow P(A)$ for each $A\in\mathcal{A}$. If $P_\alpha\in\mathcal{K}$ for each $\alpha$, one obtains $P\in\mathbb{P}$ and $(1/t)\,Q\leq P\leq t\,Q$. Since $Q\in\mathbb{P}_0$ and $P\leq t\,Q$, then $P\in\mathbb{P}_0$, i.e., $P\in\mathcal{K}$. Hence, $\mathcal{K}$ is closed, and since $\mathcal{Z}$ is compact, $\mathcal{K}$ is actually compact. If $X\in L$, $P_\alpha\in\mathcal{K}$ and $E_{P_\alpha}(X)\leq 0$ for each $\alpha$, then $P\in\mathcal{K}$ (for $\mathcal{K}$ is closed). Thus, $E_P\abs{X}<\infty$. Define the set $A_c=\{\abs{X}\leq c\}$ for $c>0$. Since $P_\alpha$ and $P$ are in $\mathcal{K}$, it follows that
\begin{gather*}
\abs{E_{P_\alpha}(X)-E_P(X)}\leq
\\\leq\abs{E_{P_\alpha}\bigl\{X-X\,I_{A_c}\bigr\}}+\abs{E_{P_\alpha}\bigl\{X\,I_{A_c}\bigr\}-E_P\bigl\{X\,I_{A_c}\bigr\}}
+\abs{E_P\bigl\{X\,I_{A_c}-X\bigr\}}
\\\leq E_{P_\alpha}\bigl\{\abs{X}\,I_{\{\abs{X}>c\}}\bigr\}+\abs{E_{P_\alpha}\bigl\{X\,I_{A_c}\bigr\}-E_P\bigl\{X\,I_{A_c}\bigr\}}
+E_P\bigl\{\abs{X}\,I_{\{\abs{X}>c\}}\bigr\}
\\\leq 2\,t\,E_Q\bigl\{\abs{X}\,I_{\{\abs{X}>c\}}\bigr\}+\abs{E_{P_\alpha}\bigl\{X\,I_{A_c}\bigr\}-E_P\bigl\{X\,I_{A_c}\bigr\}}.
\end{gather*}
Since $X\,I_{A_c}$ is bounded, $E_P\bigl\{X\,I_{A_c}\bigr\}=\lim_\alpha E_{P_\alpha}\bigl(X\,I_{A_c}\bigr)$. Thus,
\begin{gather*}
\limsup_\alpha\,\abs{E_{P_\alpha}(X)-E_P(X)}\leq 2\,t\,E_Q\bigl\{\abs{X}\,I_{\{\abs{X}>c\}}\bigr\}\quad\text{for every }c>0.
\end{gather*}
As $c\rightarrow\infty$, one obtains $E_P(X)=\lim_\alpha E_{P_\alpha}(X)\leq 0$. Hence, $\{P\in\mathcal{K}:E_P(X)\leq 0\}$ is closed.

Because of \eqref{b2fg3}, to conclude the proof of the first part it suffices to see that
\begin{gather}\label{bs3e}
\bigl\{P\in\mathcal{K}:E_P(X_1)\leq 0,\ldots,E_P(X_n)\leq 0\bigr\}\neq\emptyset
\end{gather}
for all $n\geq 1$ and $X_1,\ldots,X_n\in L$. Our proof of \eqref{bs3e} is inspired to \cite[Theorem 1]{KEMP}.

Given $n\geq 1$ and $X_1,\ldots,X_n\in L$, define
\begin{gather*}
C=\bigcup_{P\in\mathcal{K}}\bigl\{(a_1,\ldots,a_n)\in\mathbb{R}^n:E_P(X_j)\leq a_j\,\text{ for }j=1,\ldots,n\bigr\}.
\end{gather*}
Then, $C$ is a convex closed subset of $\mathbb{R}^n$. To prove $C$ closed, suppose
\begin{gather*}
(a_1^{(m)},\ldots,a_n^{(m)})\rightarrow (a_1,\ldots,a_n),\,\text{ as }\,m\rightarrow\infty,\,\text{where }\,(a_1^{(m)},\ldots,a_n^{(m)})\in C.
\end{gather*}
For each $m$, take $P_m\in\mathcal{K}$ such that $E_{P_m}(X_j)\leq a_j^{(m)}$ for all $j$. Since $\mathcal{K}$ is compact, $P_\alpha\rightarrow P$ for some $P\in\mathcal{K}$ and some subnet $(P_\alpha)$ of the sequence $(P_m)$. Hence,
\begin{gather*}
a_j=\lim_\alpha a_j^{(\alpha)}\geq\lim_\alpha E_{P_{\alpha}}(X_j)=E_P(X_j)\quad\text{for }j=1,\ldots,n.
\end{gather*}
Thus $(a_1,\ldots,a_n)\in C$, that is, $C$ is closed.

Since $C$ is convex and closed, $C$ is the intersection of all half-planes $\{f\geq u\}$ including it, where $u\in\mathbb{R}$ and $f:\mathbb{R}^n\rightarrow\mathbb{R}$ is a linear functional. Fix $f$ and $u$ such that $C\subset\{f\geq u\}$. Write $f$ as $f(a_1,\ldots,a_n)=\sum_{j=1}^n\lambda_j\,a_j$, where $\lambda_1,\ldots,\lambda_n$ are real coefficients. If $(a_1,\ldots,a_n)\in C$, then $(a_1+b,a_2,\ldots,a_n)\in C$ for $b>0$, so that
\begin{gather*}
b\,\lambda_1+f(a_1,\ldots,a_n)=f(a_1+b,a_2,\ldots,a_n)\geq u\quad\text{for all }b>0.
\end{gather*}
Hence, $\lambda_1\geq 0$. By the same argument, $\lambda_j\geq 0$ for all $j$, and this implies $f(X_1,\ldots,X_n)\in L$. Take $P\in\mathcal{K}$ such that $E_P\bigl\{f(X_1,\ldots,X_n)\bigr\}\leq 0$. Since $\bigl(E_P(X_1),\ldots,E_P(X_n)\bigr)\in C\subset\{f\geq u\}$, it follows that
\begin{gather*}
u\leq f\Bigl(\bigl(E_P(X_1),\ldots,E_P(X_n)\bigr)\Bigr)=E_P\bigl\{f(X_1,\ldots,X_n)\bigr\}\leq 0=f(0,\ldots,0).
\end{gather*}
This proves $(0,\ldots,0)\in C$ and concludes the proof of the first part.

We finally turn to the second part of the theorem. If condition \eqref{bn5r4} holds with $Q=P_0$, what already proved implies the existence of an ESM $P$ such that $(1/t)\,P_0\leq P\leq t\,P_0$. Conversely, let $P$ be an ESM satisfying $r\,P_0\leq P\leq s\,P_0$ for some $0<r\leq s$. Then, for each $X\in L$, one obtains $E_{P_0}\abs{X}\leq (1/r)\,E_P\abs{X}<\infty$ and
\begin{gather*}
E_{P_0}(X)\leq E_{P_0}(X^+)\leq (1/r)\,E_P(X^+)\leq (1/r)\,E_P(X^-)\leq (s/r)\,E_{P_0}(X^-).
\end{gather*}

\end{proof}

If $L$ is a linear space, condition \eqref{bn5r4} can be written in some other ways. One of these ways is
\begin{gather}\label{bnhkjp5r4}
E_Q\abs{X}<\infty\quad\text{and}\quad \abs{\,E_Q(X)\,}\leq c\,E_Q\abs{X}\tag{b**}
\end{gather}
for all $X\in L$, some $Q\in\mathcal{Q}$ and some constant $c<1$. In fact, \eqref{bn5r4} implies \eqref{bnhkjp5r4} with $c=k/(k+2)$ while \eqref{bnhkjp5r4} implies \eqref{bn5r4} with $k=2c/(1-c)$. In the sequel, when $L$ is a linear space, we make often use of condition \eqref{bnhkjp5r4}. However, we note that \eqref{bnhkjp5r4} is stronger than \eqref{bn5r4} if $L$ fails to be a linear space. For instance, \eqref{bn5r4} holds and \eqref{bnhkjp5r4} fails for the convex cone $L=\{X_b:b\leq 0\}$, where $X_b(\omega)=b$ for all $\omega\in\Omega$.

A last remark is that, if condition \eqref{d5c} holds for some $Y$, then $E_Q\abs{X}<\infty$ for all $X\in L$ can be replaced by $E_Q(Y)<\infty$ in both conditions \eqref{bn5r4} and \eqref{bnhkjp5r4}.

\section{Examples}\label{sg3}

In this Section, $L$ is a linear space. Up to minor changes, however, most examples could be adapted to a convex cone $L$. Recall that, since $L$ is a linear space, \linebreak $E_P(X)=0$ whenever $X\in L$ and $P$ is an ESFA or an ESM.

\begin{ex}\label{i8uh} {\bf (Finite dimensional spaces).} Let $X_1,\ldots,X_d$ be real random variables on $(\Omega,\mathcal{A},P_0)$. Is there a $\sigma$-additive probability $P\in\mathbb{P}_0$ such that
\begin{equation*}
P\sim P_0,\quad E_P\abs{X_j}<\infty\quad\text{and}\quad E_P(X_j)=0\quad\text{for all }j\,\,\,?
\end{equation*}
The question looks natural and the answer is intuitive as well. Such a $P$ exists if and only if $L\cap L_0^+=\{0\}$, that is \eqref{nac} holds, with
\begin{equation*}
L=\,\text{linear space generated by }X_1,\ldots,X_d.
\end{equation*}
This is a known result (for instance, it follows from \cite[Theorem 2.4]{DMW}). However, to our knowledge, such result does not admit elementary proofs. We now deduce it as an immediate consequence of Theorem \ref{p3b}.

Up to replacing $X_j$ with $Y_j=\frac{X_j}{1+\sum_{i=1}^d\abs{X_i}}$, it can be assumed $E_{P_0}\abs{X_j}<\infty$ for all $j$. Let $K=\{X\in L:E_{P_0}\abs{X}=1\}$, equipped with the $L_1$-norm. If $L\cap L_0^+=\{0\}$, then $\abs{\,E_{P_0}(X)\,}<1$ for each $X\in K$. Since $K$ is compact and $X\mapsto E_{P_0}(X)$ is continuous, $\sup_{X\in K}\abs{\,E_{P_0}(X)\,}<1$. Thus, condition \eqref{bnhkjp5r4} holds with $Q=P_0$.

Two remarks are in order. First, if $E_{P_0}\abs{X_j}<\infty$ for all $j$ (so that the $X_j$ should not be replaced by the $Y_j$) the above argument implies that $P$ can be taken to satisfy $r\,P_0\leq P\leq s\,P_0$ for some $0<r\leq s$. Second, Theorem \ref{p3b} also yields a reasonably simple proof of \cite[Theorem 2.6]{DMW}, i.e., the main result of \cite{DMW}.

\end{ex}

\begin{ex}\label{cd5t} {\bf (A question by Rokhlin and Schachermayer).} Suppose that $E_{P_0}(X_n)=0$ for all $n\geq 1$, where the $X_n$ are real bounded random variables. Let $L$ be the linear space generated by the sequence $(X_n:n\geq 1)$ and
\begin{equation*}
P_f(A)=E_{P_0}\bigl\{f\,I_A\bigr\},\quad A\in\mathcal{A},
\end{equation*}
where $f$ is a strictly positive measurable function on $\Omega$ such that $E_{P_0}(f)=1$. Choosing $P_0$, $f$ and $X_n$ suitably, in \cite[Example 3]{ROKSCA} it is shown that

\begin{itemize}

\item[(i)] There is a bounded finitely additive measure $T$ on $\mathcal{A}$ such that
\begin{equation*}
T\ll P_0,\quad T(A)\geq P_f(A)\quad\text{and}\quad\int X\,dT=0\quad\text{for all }A\in\mathcal{A}\text{ and }X\in L;
\end{equation*}

\item[(ii)] {\it No} measurable function $g:\Omega\rightarrow [0,\infty)$ satisfies
\begin{equation*}
g\geq f\text{ a.s.,}\quad E_{P_0}(g)<\infty\quad\text{and}\quad E_{P_0}\bigl\{g\,X\bigr\}=0\text{ for all }X\in L.
\end{equation*}

\end{itemize}

In \cite[Example 3]{ROKSCA}, $L$ is spanned by a (infinite) sequence. Thus, at page 823, the question is raised of whether (i)-(ii) can be realized when $L$ is finite dimensional.

We claim that the answer is no. Suppose in fact that $L$ is generated by the bounded random variables $X_1,\ldots,X_d$. Since $P_f\sim P_0$ and $E_{P_0}(X)=0$ for all $X\in L$, then $L\cap L_0^+=\{0\}$ under $P_f$ as well. Arguing as in Example \ref{i8uh}, one obtains $E_Q(X)=0$, $X\in L$, for some $Q\in\mathbb{P}_0$ such that $r\,P_f\leq Q\leq s\,P_f$, where $0<r\leq s$. Therefore, a function $g$ satisfying the conditions listed in (ii) is $g=\psi/r$, where $\psi$ is a density of $Q$ with respect to $P_0$.

\end{ex}

\begin{ex} {\bf (Example 7 of \cite{BPR2} revisited).} Let $L$ be the linear space generated by the random variables $X_1,X_2,\ldots$, where each $X_n$ takes values in $\{-1,1\}$ and
\begin{equation}\label{b7u8}
P_0\bigl(X_1=x_1,\ldots,X_n=x_n\bigr)>0\quad\text{for all }n\geq 1\text{ and }x_1,\ldots,x_n\in\{-1,1\}.
\end{equation}
Every $X\in L$ can be written as $X=\sum_{j=1}^nb_jX_j$ for some $n\geq 1$ and $b_1,\ldots,b_n\in\mathbb{R}$. By \eqref{b7u8},
\begin{equation*}
\text{ess sup}(X)=\abs{b_1}+\ldots+\abs{b_n}=\text{ess sup}(-X).
\end{equation*}
Hence, condition (b) is trivially true, and Theorem \ref{t1} implies the existence of an ESFA. However, ESM's can fail to exist. To see this, let $P_0(X_n=-1)=(n+1)^{-2}$ and fix $Q\in\mathcal{Q}$. Under $P_0$, the Borel-Cantelli lemma yields $X_n\overset{a.s.}\longrightarrow 1$. Hence, $X_n\overset{a.s.}\longrightarrow 1$ under $Q$ as well, and $Q$ fails to be an ESM for $E_Q(X_n)\rightarrow 1$. This is basically Example 7 of \cite{BPR2}. We now make two remarks on such example.

First, points (i)-(ii) of previous Example \ref{cd5t} hold true for {\it every} strictly positive $f\in L_1$. Fix in fact a measurable function $f:\Omega\rightarrow (0,\infty)$ with $E_{P_0}(f)=1$. Then,
\begin{equation*}
E_{P_f}(X)\leq\text{ess sup}(X)=\text{ess sup}(-X)\quad\text{for all }X\in L.
\end{equation*}
By Corollary \ref{v7t}, there are $r>0$ and $P\in\mathbb{S}$ such that $P\geq r\,P_f$. Hence, point (i) is satisfied by $T=P/r$. If $g$ meets the conditions listed in (ii), then
\begin{equation*}
Q(A)=\frac{E_{P_0}\bigl\{g\,I_A\bigr\}}{E_{P_0}(g)},\quad A\in\mathcal{A},
\end{equation*}
is an ESM. Therefore, point (ii) holds true as well.

Second, Example 7 of \cite{BPR2} can be modified, preserving the possible economic meaning (provided the $X_n$ are regarded as asset prices) but allowing for ESM's to exist. Let $N$ be a random variable, independent of the sequence $(X_n)$, with values in $\{1,2,\ldots\}$. To fix ideas, suppose $P_0(N=n)>0$ for all $n\geq 1$. Take $L$ to be the collection of $X$ of the type
\begin{equation*}
X=\sum_{j=1}^Nb_jX_j
\end{equation*}
for all real sequences $(b_j)$ such that $\sum_j\abs{b_j}<\infty$. Then, $L$ is a linear space of bounded random variables. Given $n\geq 1$, define $L_n$ to be the linear space spanned by $X_1,\ldots,X_n$. Because of \eqref{b7u8} and the independence between $N$ and $(X_n)$, for each $X\in L_n$ one obtains
\begin{equation*}
P_0\bigl(X>0\mid N=n\bigr)>0\quad\Longleftrightarrow\quad P_0\bigl(X<0\mid N=n\bigr)>0.
\end{equation*}
Hence, condition \eqref{nac} holds with $P_0\bigl(\cdot\mid N=n\bigr)$ and $L_n$ in the place of $P_0$ and $L$. Arguing as in Example \ref{i8uh}, it follows that $E_{Q_n}(X)=0$ for all $X\in L_n$ and some $Q_n\in\mathbb{P}_0$ such that $Q_n\sim P_0\bigl(\cdot\mid N=n\bigr)$. Since $Q_n(N=n)=1$, then $E_{Q_n}(X)=0$ for all $X\in L$. Thus, an ESM is $Q=\sum_{n=1}^\infty2^{-n}Q_n$.

Incidentally, in addition to be an ESM for $L$, such a $Q$ also satisfies
\begin{equation*}
E_Q\Bigl(\,\sum_{j=1}^{N\wedge n}b_jX_j\Bigr)=0\quad\text{for all }n\geq 1\text{ and }b_1,\ldots,b_n\in\mathbb{R}.
\end{equation*}
\end{ex}

\begin{ex}\label{s4r} {\bf (Approximating ESM's via ESFA's).} Suppose $L$ consists of bounded random variables and, for each $\epsilon>0$, there is $Q_\epsilon\in\mathcal{Q}$ such that
\begin{equation*}
E_{Q_\epsilon}(X)\leq\epsilon\,\text{ess sup}(-X)\quad\text{for all }X\in L.
\end{equation*}
In view of Theorem \ref{t1},
\begin{equation*}
P_\epsilon=\frac{Q_\epsilon+\epsilon\,T_\epsilon}{1+\epsilon}\in\mathbb{S}\quad\text{for some }T_\epsilon\in\mathbb{P}.
\end{equation*}
Thus, for each $\epsilon>0$, there is an ESFA $P_\epsilon$ whose $\sigma$-additive equivalent part $Q_\epsilon$ has weight $(1+\epsilon)^{-1}$. Nevertheless, as shown in \cite[Example 9]{BPR2}, ESM's can fail to exist. We now give an example more effective than \cite[Example 9]{BPR2}.

Let $Y$ and $Z$ be random variables which, under $P_0$, are i.i.d. with a Poisson distribution with parameter 1. Take $L$ to be the linear space generated by the sequence $(X_j:j\geq 0)$, where
\begin{equation*}
X_0=I_{\{Y=0\}}-I_{\{Z=0\}}\quad\text{and}\quad X_j=I_{\{Y=j\}}-P_0(Y=j)\,I_{\{Z>0\}}\quad\text{for }j>0.
\end{equation*}
If $P\in\mathbb{P}$ meets $P(Z>0)>0$ and $E_P(X_j)=0$ for each $j\geq 0$, then
\begin{gather*}
\sum_{j=0}^\infty P(Y=j)=P(Z=0)+P(Z>0)\,\sum_{j=1}^\infty P_0(Y=j)
\\=P(Z=0)+P(Z>0)\,P_0(Y>0)<1.
\end{gather*}
Hence, no ESM is available. However, given $\epsilon>0$, one can define
\begin{gather*}
Q_\epsilon(\cdot)=\frac{\epsilon\,P_0(\cdot\mid B)+P_0(\cdot\mid B^c)}{\epsilon+1}\quad\text{where }B=\{Y>0\}\cup\{Z>0\}.
\end{gather*}
Fix $X\in L$, say $X=\sum_{j=0}^nb_jX_j$ where $n\geq 1$ and $b_0,\ldots,b_n\in\mathbb{R}$, and define \linebreak $b=\sum_{j=1}^nb_jP_0(Y=j)$. Since $-X=b$ on the set $\{Y>n,Z>0\}$, then $\text{ess sup}(-X)\geq b$. Since $E_{P_0(\cdot\mid B)}(X_0)=0$ and $X_j=0$ on $B^c$ for all $j\geq 0$, one obtains
\begin{gather*}
E_{Q_\epsilon}(X)=\frac{\epsilon}{\epsilon+1}\,\sum_{j=1}^nb_jE_{P_0(\cdot\mid B)}(X_j)=\frac{b\,\epsilon}{\epsilon+1}\,\frac{P_0(Z=0)}{P_0(B)}.
\end{gather*}
Hence, $E_{Q_\epsilon}(X)\leq\epsilon\,\text{ess sup}(-X)$ follows from
\begin{gather*}
\frac{P_0(Z=0)}{P_0(B)}=\frac{P_0(Z=0)}{1-P_0(Z=0)^2}=\frac{e^{-1}}{1-e^{-2}}< 1.
\end{gather*}

\end{ex}

\begin{ex}\label{nflvr5} {\bf (No free lunch with vanishing risk).} It is not hard to see that $\mathbb{S}\neq\emptyset$ implies
\begin{equation*}
\overline{(L-L_0^+)\cap L_\infty}\,\cap L_\infty^+=\{0\}\quad\text{with the closure in the norm-topology of }L_\infty.
\end{equation*}
Unlike the bounded case (see the remarks after Theorem \ref{t1}), however, the converse is not true.

Let $Z$ be a random variable such that $Z>0$ and $P_0(a<Z<b)>0$ for all $0\leq a<b$. Take $L$ to be the linear space generated by $(X_n:n\geq 0)$, where
\begin{gather*}
X_0=Z\,\sum_{k\geq 0}(-1)^kI_{\{k\leq Z<k+1\}}\quad\text{and}
\\X_n=I_{\{Z<n\}}+Z\,\sum_{k\geq n}(-1)^kI_{\{k+2^{-n}\leq Z<k+1\}}\quad\text{for }n\geq 1.
\end{gather*}
Also, fix $P\in\mathbb{P}$ such that $X_n$ is $P$-integrable for each $n\geq 0$ and $P=\delta\,P_1+(1-\delta)\,Q$ for some $\delta\in [0,1)$, $P_1\in\mathbb{P}$ and $Q\in\mathcal{Q}$. From the definition of $P$-integrability (recalled in Section \ref{not6}) one obtains
\begin{gather*}
E_P(X_n)=P(Z<n)+\sum_{k\geq n}(-1)^kE_P\Bigl\{Z\,I_{\{k+2^{-n}\leq Z<k+1\}}\Bigl\}\quad\text{for }n\geq 1.
\end{gather*}
Since $Z=\abs{X_0}$ is $P$-integrable, then
\begin{gather*}
\Abs{\,\sum_{k\geq n}(-1)^kE_P\Bigl\{Z\,I_{\{k+2^{-n}\leq Z<k+1\}}\Bigl\}}\leq \sum_{k\geq n}E_P\Bigl\{Z\,I_{\{k\leq Z<k+1\}}\Bigl\}
\\= E_P\bigl\{Z\,I_{\{Z\geq n\}}\bigr\}\longrightarrow 0\quad\text{as }n\rightarrow\infty.
\end{gather*}
It follows that
\begin{gather*}
\liminf_nE_P(X_n)=\liminf_nP(Z<n)\geq (1-\delta)\,\liminf_nQ(Z<n)=(1-\delta)>0.
\end{gather*}
Hence $P\notin\mathbb{S}$, and this implies $\mathbb{S}=\emptyset$ since each member of $\mathbb{S}$ should satisfy the requirements asked to $P$. On the other hand, it is easily seen that
\begin{equation*}
\text{ess sup}(X)=\infty\quad\text{for each }X\in L\text{ such that }P_0(X\neq 0)>0.
\end{equation*}
Thus, $(L-L_0^+)\cap L_\infty=-L_\infty^+$ which trivially implies
\begin{equation*}
\overline{(L-L_0^+)\cap L_\infty}\,\cap L_\infty^+=\overline{(-L_\infty^+)}\,\cap L_\infty^+=(-L_\infty^+)\cap L_\infty^+=\{0\}.
\end{equation*}
\end{ex}

\begin{ex}\label{u6} {\bf (Equivalent probability measures with given marginals).} Let
\begin{equation*}
\Omega=\Omega_1\times\Omega_2\quad\text{and}\quad\mathcal{A}=\mathcal{A}_1\otimes\mathcal{A}_2
\end{equation*}
where $(\Omega_1,\mathcal{A}_1)$ and $(\Omega_2,\mathcal{A}_2)$ are measurable spaces. Fix a ($\sigma$-additive) probability $T_i$ on $\mathcal{A}_i$ for $i=1,2$. Is there a $\sigma$-additive probability $P\in\mathbb{P}_0$ such that
\begin{equation}\label{stanc}
P\sim P_0\quad\text{and}\text\quad P\bigl(\cdot\times\Omega_2\bigr)=T_1(\cdot),\,\,P\bigl(\Omega_1\times\cdot\bigr)=T_2(\cdot)\,\,?
\end{equation}

Again, the question looks natural (to us). Nevertheless, as far as we know, such a question has been neglected so far. For instance, the well known results by Strassen \cite{STR} do not apply here, for $\mathcal{Q}$ fails to be closed in any reasonable topology on $\mathbb{P}_0$. However, a possible answer can be manufactured via Theorem \ref{p3b}.

Let $L_i$ be a class of bounded measurable functions on $\Omega_i$, $i=1,2$. Suppose each $L_i$ is both a linear space and a determining class, in the sense that, if $R$ and $T$ are ($\sigma$-additive) probabilities on $\mathcal{A}_i$ then
\begin{equation*}
R=T\quad\Longleftrightarrow\quad E_R(f)=E_T(f)\text{ for all }f\in L_i.
\end{equation*}
Define $L$ to be the class of random variables $X$ on $\Omega=\Omega_1\times\Omega_2$ of the type
\begin{equation*}
X(\omega_1,\omega_2)=\bigl\{f(\omega_1)-E_{T_1}(f)\bigr\}+\bigl\{g(\omega_2)-E_{T_2}(g)\bigr\}
\end{equation*}
for all $f\in L_1$ and $g\in L_2$. Then, $L$ is a linear space of bounded random variables. Furthermore, there is $P\in\mathbb{P}_0$ satisfying \eqref{stanc} if and only if $L$ admits an ESM. In turn, by Theorem \ref{p3b}, the latter fact amounts to
\begin{equation*}
\inf_{Q\in\mathcal{Q}}\,\sup_{X\in L\setminus\{0\}}\,\frac{\abs{\,E_Q(X)\,}}{E_Q\abs{X}}<1.
\end{equation*}
Here, condition \eqref{bn5r4} has been replaced by condition \eqref{bnhkjp5r4} since $L$ is a linear space, and $E_Q\abs{X}<\infty$ is because each $X\in L$ is bounded. Further, $X\in L\setminus\{0\}$ stands for $X\in L$ and $P_0(X\neq 0)>0$.

So far, we tacitly assumed that only the $\sigma$-additive solutions of \eqref{stanc} make some interest. But this is not necessarily true, and one could be interested in a finitely additive solution as well. Then, it suffices to apply Theorem \ref{t1}. For every $i=1,2$, take $L_i$ to be the collection of all simple functions with respect to $(\Omega_i,\mathcal{A}_i)$. Basing on (say) condition (b), there is $P\in\mathbb{P}$ satisfying \eqref{stanc} if and only if
\begin{gather*}
\text{ess sup}(X)>0\quad\text{for all }X\in L\setminus\{0\}\quad\text{and}
\\\inf_{Q\in\mathcal{Q}}\,\sup_{X\in L\setminus\{0\}}\,\frac{E_Q(X)}{\text{ess sup}(-X)}<\infty.
\end{gather*}

\end{ex}

\end{document}